\theoremstyle{definition}
\newtheorem{definition}{Definition}[section]
\newtheorem{proposition}[definition]{Proposition}
\newtheorem{theorem}[definition]{Theorem}
\newtheorem{corollary}[definition]{Corollary}
\theoremstyle{remark}
\DeclareMathOperator{\Ext}{Ext}
\DeclareMathOperator{\Spec}{Spec}
\begin{document}

\title{Extensions of mixed Hodge modules and Picard-Fuchs equations
}

\author{Pedro L. del Angel R. \\CIMAT \and
        Jos\'e J. Hern\'andez C.  \\CONACYT CIMAT  \\
}

\date{}

\maketitle

\begin{abstract}
The normal function associated to algebraic cycles in higher Chow groups defines a differential equation. This Picard-Fuchs equation defines an extension of $D$-modules as well as an extension of local systems. In this paper, we show that both extensions define the same extension of mixed Hodge modules determined by the normal function. 

\end{abstract}

\section{Introduction}\label{sec:intro}

When we study a family of higher algebraic cycles, i.e. elements in $\text{CH}^r(X,m)$, for $X$ a smooth projective variety, we can associate a differential equation to the family, given by the Picard-Fuchs operator on the normal function of a cycle. It turns out that the non vanishing of this equation is related to the problem of finding indecomposable cycles, see \cite{dAMS}. Thus, the properties of the differential equation can be studied in view of its relation with finding non trivial cycles. In \cite{delAngelMS}, they study its rationality and field of definition of its coefficients. In this paper we show how to relate the extension of $D$-modules that arises from the differential equation with the extension that comes from the normal function where the Picard-Fuchs operator is evaluated, using the Abel-Jacobi map and the theory of mixed Hodge modules.

Mixed Hodge modules provide a generalization of classical Hodge theory. They can be thought as perverse sheaves with a mixed Hodge structure. The category of mixed Hodge modules was defined by M. Saito in \cite{SaitoM3}, \cite{SaitoM5}. In this work we are mainly interested in the Abel Jacobi map associated to higher Chow cycles. It turns out that this map can be factored through an extension in the category of mixed Hodge modules, thus providing an interpretation of normal functions as sections of mixed Hodge modules.

When dealing with admissible smooth families and higher algebraic cycles defined over a whole family, one has the Abel Jacobi map on the total space but also on the fibres, so one can think on both, the extension of MHM associated to the total space as well as the corresponding extensions associated to the fibres. In the later case, the normal functions can be thought of as sections on some smooth family of intermediate jacobians and they are related to non-homogeneous Picard-Fuchs equations.

\section{Extensions of Mixed Hodge Modules}

Let $X$ be a smooth variety  over $\mathbb{C}$. Given any $D_X$-module $\mathcal{M}$, via the de Rham complex, we have the de Rham functor from $D_X$-modules to complexes. Moreover, we can say the following:

\begin{theorem}[Riemann-Hilbert correspondence]
Let $X$ be a smooth variety over $\mathbb{C}$. Then the de Rham functor $\mathcal{M}\mapsto DR(\mathcal{M})$ induces an equivalence of categories between the category of holonomic $D_X$-modules with regular singularities and the category of perverse sheaves Perv$(\mathbb{C}_X)$.
\end{theorem}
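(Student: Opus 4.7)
The plan is to establish the correspondence in three logical steps: well-definedness of the functor, full faithfulness, and essential surjectivity. First, I would check that $DR(\mathcal{M}) := \Omega_X^{\bullet} \otimes_{\mathcal{O}_X} \mathcal{M}[\dim X]$ actually lands in $\mathrm{Perv}(\mathbb{C}_X)$ when $\mathcal{M}$ is holonomic with regular singularities. Constructibility is Kashiwara's theorem: one stratifies $X$ so that each cohomology sheaf of $DR(\mathcal{M})$ is locally constant on every stratum, using that the characteristic variety of $\mathcal{M}$ is a Lagrangian subvariety of $T^*X$ whose projection to $X$ provides the stratification. The support and cosupport dimension bounds (the perversity conditions) then follow from the holonomicity estimate $\dim \mathrm{Char}(\mathcal{M}) = \dim X$ combined with a local index computation.

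For full faithfulness, my strategy would be to upgrade the statement to derived categories and reduce via adjunction to showing that the natural map
\[
R\Hom_{D_X}(\mathcal{M}, \mathcal{N}) \longrightarrow R\Hom(DR(\mathcal{M}), DR(\mathcal{N}))
\]
is a quasi-isomorphism for holonomic regular $\mathcal{M}, \mathcal{N}$. Localizing on $X$, this reduces to Deligne's theorem identifying regular meromorphic connections on a punctured disk with their monodromy local systems. The regular singularity hypothesis is essential here, since it is exactly what makes de Rham cohomology detect the analytic Hom correctly rather than just the formal or algebraic one.

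Essential surjectivity is, I expect, the main obstacle and requires the heaviest machinery. Given a perverse sheaf $\mathcal{F}$, one must construct a regular holonomic $D_X$-module $\mathcal{M}$ with $DR(\mathcal{M}) \simeq \mathcal{F}$. I would proceed by noetherian induction on $\dim \mathrm{supp}(\mathcal{F})$: choose a Whitney stratification along which $\mathcal{F}$ is constructible, invoke Deligne's correspondence on the open stratum to obtain a regular connection, and then extend across the boundary using the intermediate (minimal) extension. The matching of the two gluing procedures — the Beilinson–Bernstein–Deligne gluing of perverse sheaves via nearby/vanishing cycles $\psi_f, \varphi_f$ on one side, and the Kashiwara–Malgrange $V$-filtration on the $D$-module side — is the delicate point. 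Verifying this compatibility stratum by stratum, and confirming that the resulting object is again holonomic with regular singularities, is what forces one to use Mebkhout's or Beilinson's full gluing formalism rather than a naive induction.
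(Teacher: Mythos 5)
The paper does not prove this statement: it is quoted as the classical Riemann--Hilbert correspondence of Kashiwara and Mebkhout and used as a black box, so there is no internal argument to compare yours against. Judged on its own terms, your proposal is a faithful roadmap of the standard proof in the literature --- constructibility and perversity of $DR(\mathcal{M})$ via Kashiwara's constructibility theorem and the Lagrangian property of the characteristic variety, full faithfulness by reduction to Deligne's equivalence for regular meromorphic connections, and essential surjectivity by induction on the support with gluing --- but as written it is a survey of the proof rather than a proof. Each of your three steps is itself a substantial theorem occupying many pages in Kashiwara's or Mebkhout's work, and none is actually carried out.

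Two elisions are worth flagging concretely. First, in the full faithfulness step, ``localizing on $X$ reduces to Deligne's theorem on a punctured disk'' skips the real content: the reduction requires resolution of singularities to pass to the normal crossings case, together with the commutation of $DR$ with duality and with proper direct images; these compatibilities are where most of the work lies, and they are also what you need later to know that regular holonomicity is preserved by the operations used in your inductive construction. Second, the statement in the paper is an equivalence of \emph{abelian} categories, whereas the natural statement one proves is the derived equivalence $D^b_{rh}(D_X)\simeq D^b_c(\mathbb{C}_X)$; to descend to the abelian hearts you must show that $DR$ is $t$-exact for the perverse $t$-structure, which is exactly the perversity computation you gesture at in your first step but do not connect to the derived formulation you adopt in the second. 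There is also the algebraic-versus-analytic point (the paper's $X$ is an algebraic variety, while $\mathrm{Perv}(\mathbb{C}_X)$ lives on $X^{\mathrm{an}}$), which requires a GAGA-type comparison for regular holonomic modules. None of this makes your outline wrong --- it is the correct architecture --- but for a result of this depth the honest course, and the one the paper itself takes, is to cite Kashiwara and Mebkhout rather than to present the skeleton as a proof.
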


Let PervW$(\mathbb{C}_X)$ be the category of weight filtered perverse sheaves in Perv$(\mathbb{C}_X)$ and $\text{MFW}_{rh}(X)$ be the category of regular holonomic $D_X$-modules with a good filtration together with a weight filtration. There is a natural functor from $\text{MFW}_{rh}(X)$ to PervW$(\mathbb{C}_X)$. It takes the pair $(\mathcal{M},W_{\mathcal{M}})$ to the pair $(DR(\mathcal{M}),DR(W_{\mathcal{M}}))$. If $\text{MFW}_{rh}(X;\mathbb{Q})$ is the fibre product $\text{PervW}(\mathbb{Q}_X)\times_{\text{PervW}(\mathbb{C}_X)} \text{MFW}_{rh}(X)$, its objects are of the form $(K^{\bullet}_\mathbb{Q},W_{\mathbb{Q}},\mathcal{M},F,W,\alpha)$ where $K^{\bullet}_\mathbb{Q}$ is a perverse sheaf with weight filtration $W_{\mathbb{Q}}$, $\mathcal{M}$ is a holonomic $D_X$-module with regular singularities and weight filtration $W$, $F$ is a good filtration on $\mathcal{M}$ and $\alpha$ is an isomorphism of filtered objects, i.e. $\alpha(K^{\bullet}_\mathbb{Q}\otimes\mathbb{C},W_\mathbb{Q}\otimes\mathbb{C})\simeq(DR(\mathcal{M}),DR(W_{\mathcal{M}}))$, where $DR$ is the de Rham functor.

\begin{theorem}[M. Saito]
For any smooth variety $X$ over $\mathbb{C}$ there exists an abelian category $\text{MHM}(X)$ that is a full subcategory of $\text{MFW}_{rh}(X;\mathbb{Q})$. $\text{MHM}(X)$ is called the category of mixed Hodge modules.
\end{theorem}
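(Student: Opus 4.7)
The plan is to follow Saito's original construction in \cite{SaitoM3, SaitoM5}; since the statement is a deep existence theorem whose full proof is book-length, I will only outline the architecture.

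The base case is $X=\mathrm{pt}$: one declares $\text{MHM}(\mathrm{pt})$ to be the category of graded-polarizable mixed $\mathbb{Q}$-Hodge structures, embedded in $\text{MFW}_{rh}(\mathrm{pt};\mathbb{Q})$ by identifying a regular holonomic $D$-module on a point with its underlying $\mathbb{C}$-vector space. Abelianness in this case is Deligne's classical result.

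For positive-dimensional $X$ I would first define the \emph{pure} subcategory $\text{HM}(X,w)$ of polarizable Hodge modules of weight $w$, by induction on the dimension of the support. An object $(\mathcal{M},F,K^{\bullet}_{\mathbb{Q}},W,\alpha)$ of $\text{MFW}_{rh}(X;\mathbb{Q})$ is declared pure of weight $w$ if $W_{w}\mathcal{M}=\mathcal{M}$, $W_{w-1}\mathcal{M}=0$, and for every locally defined holomorphic function $f$ the Kashiwara-Malgrange $V$-filtration is strict with respect to $F$, and the nearby and vanishing cycle objects $\psi_f\mathcal{M}$, $\varphi_{f,1}\mathcal{M}$ (equipped with their relative monodromy weight filtration) again satisfy the axioms in strictly lower-dimensional support. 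One then proves a strict-support decomposition and identifies objects of $\text{HM}(X,w)$ with strict support $Z$ with polarizable variations of Hodge structure on a Zariski open $U\subset Z$, extended by intermediate extension to $X$. Finally, $\text{MHM}(X)$ is the full subcategory of objects admitting a finite weight filtration $W$ with $\mathrm{gr}^{W}_{\bullet}$ pure and with admissible gluing data, i.e.\ the relative monodromy filtration of $N=\log T_u$ on $\psi_f$ exists along every function $f$ and matches the induced weight filtration.

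The hard part, which I expect to be the main obstacle, is the verification of abelianness: one must show that kernels and cokernels in $\text{MFW}_{rh}(X;\mathbb{Q})$ of morphisms between mixed Hodge modules remain mixed Hodge modules. This reduces to strictness of morphisms with respect to $F$ and to the $V$-filtrations along arbitrary functions $f$. Saito establishes these strictness statements by a simultaneous induction on $\dim\,\mathrm{supp}$ together with the decomposition theorem, hard Lefschetz, and semi-simplicity in the pure case; each step is reduced to a lower-dimensional situation via $\psi_f$ and $\varphi_f$, and polarizations are used to control the interaction of $F$ with the nilpotent operator $N$ and with $W$. This strictness package is the technical heart of the theory, and I would import it wholesale from Saito's papers rather than attempt to reprove it.
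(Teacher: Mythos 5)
The paper offers no proof of this statement: it is quoted as a foundational theorem of Saito and supported only by the citations to \cite{SaitoM3} and \cite{SaitoM5}, which is exactly where your outline also ultimately defers for the strictness package. Your sketch is a faithful summary of Saito's actual construction (pure Hodge modules by induction on the dimension of support via $\psi_f$ and $\varphi_{f,1}$, strict support decomposition, identification with polarizable variations of Hodge structure, then weight filtrations and admissibility for the mixed case), so it is consistent with, and more informative than, what the paper provides.
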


The category of mixed Hodge modules contains a semi-simple full subcategory of modules of pure weight. These are called polarizable Hodge modules. In the bounded derived category $D^b(\text{MHM}(X))$ all expected operations are defined: $f_{\ast},f^{\ast}, f_{!},f^{!},\mathbb{D}$, etc. $\text{MHM}(\Spec ({\mathbb{C}}))$ is isomorphic to the category of graded polarizable mixed Hodge structures. For $k$ a subfield of $\mathbb{C}$, the category $\text{MHM}(X)$ of mixed Hodge modules of a smooth variety $X$ over $k$ is well defined.

Let $r\in \mathbb{Z}$. The $D_X$-module $\mathcal{O}_X$ has a good filtration defined by $Gr^F_{r}(\mathcal{O}_X)=\mathcal{O}_X$. Since $DR(\mathcal{O}_X)$ is quasi-isomorphic to $\mathbb{C}_X[\text{dim}X]$, we get a mixed Hodge module $\mathbb{Q}_X(r)[\text{dim}X]$ given by 
\[
(\mathbb{Q}_X(r)[\text{dim}X],W_{\mathbb{Q}},\mathcal{O}_X,F, W, \alpha),
\]
 where $Gr_{\text{dim}X-2r}^{W_{\mathbb{Q}}}(\mathbb{Q}_X(r)[\text{dim}X])=\mathbb{Q}_X(r)[\text{dim}X]$ and $W$ and $\alpha$ have the only possible interpretation. This is called the constant (or Tate) mixed Hodge module. We usually use the shifted module \[\mathbb{Q}_X(r)=\mathbb{Q}_X(r)[\text{dim}X][-\text{dim}X].\]

The following proposition and its corollary are essential to construct the maps we are looking for.
\begin{proposition}[\cite{SaitoM4}]\label{dec}
Let $f:X\to Y$ be a proper morphism of smooth varieties. If $M\in \text{MHM}(X)$ is pure, we have a non canonical isomorphism
\[
f_{\ast}M\simeq\bigoplus_{i}H^{i}f_{\ast}M[-i]
\]
in $D^b\text{MHM}(Y)$.
\end{proposition}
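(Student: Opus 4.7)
The plan is to adapt Deligne's proof of the decomposition theorem (originally for smooth proper morphisms, later extended by Beilinson--Bernstein--Deligne for pure perverse sheaves) to the category of pure Hodge modules, using the inputs from Saito's theory that are available in this framework. The two key results I would take as given are: (i) the \emph{stability of purity under proper direct image}, i.e.\ if $M$ is a polarizable pure Hodge module of weight $w$ on $X$ and $f$ is proper, then each $H^i f_{\ast} M$ is a polarizable pure Hodge module of weight $w+i$ on $Y$; and (ii) the \emph{relative hard Lefschetz theorem}, stating that for a relatively ample line bundle $L$ on $X$, cup product with $c_1(L)^i$ induces an isomorphism $H^{-i}f_{\ast}M \xrightarrow{\sim} H^{i}f_{\ast}M(i)$ for every $i \geq 0$.

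Granting these, the first step of the proof is a reduction to the projective case: since the statement is local on $Y$, Chow's lemma allows me to assume $f$ is projective, so that a relatively ample $L$ exists globally. The second step is to introduce the Lefschetz operator $\eta \colon f_{\ast} M \to f_{\ast} M[2](1)$ induced by cup product with $c_1(L)$, which by (ii) induces a hard Lefschetz $\mathfrak{sl}_2$-structure on $\bigoplus_i H^i f_{\ast} M$ inside the abelian category $\mathrm{MHM}(Y)$.

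The third step is Deligne's formal splitting argument. One defines the primitive parts $P^{-i} = \ker\bigl(\eta^{i+1} \colon H^{-i}f_{\ast}M \to H^{i+2}f_{\ast}M(i+1)\bigr)$ for $i \geq 0$ and obtains the primitive decomposition $H^j f_{\ast} M \simeq \bigoplus_{i} \eta^{\bullet} P^{\bullet}$. Using the semisimplicity of the category of polarizable pure Hodge modules of a fixed weight (guaranteed by the existence of a polarization), one can lift the primitive decomposition to a decomposition of $f_{\ast} M$ in $D^b \mathrm{MHM}(Y)$ by constructing, for each $i$, a morphism $P^{-i}[i] \to f_{\ast} M$ that splits off the corresponding summand; the obstruction to constructing such a splitting lies in suitable $\Ext^1$ groups between pure objects of different weights, which vanish by strictness of morphisms in $\mathrm{MHM}$ with respect to the weight filtration.

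The main obstacle is ensuring that Deligne's splitting, which is classically carried out in the derived category of perverse sheaves, genuinely lifts to $D^b \mathrm{MHM}(Y)$; this requires both the semisimplicity of polarizable pure Hodge modules of fixed weight and the vanishing of $\Ext^1$ between pure objects of different weights in $\mathrm{MHM}(Y)$. Both follow from Saito's construction of polarizations and the strictness property of the weight filtration. Once these are in hand, the non-canonical isomorphism $f_{\ast}M \simeq \bigoplus_i H^i f_{\ast} M [-i]$ follows formally.
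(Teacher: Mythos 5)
The paper does not prove this proposition at all: it is quoted from Saito (the cited preprint \emph{On the formalism of mixed sheaves}, together with \emph{Modules de Hodge polarisables} and \emph{Mixed Hodge modules}), so there is no in-paper argument to compare yours against. Judged on its own, your outline correctly identifies the standard route --- stability of purity under proper direct image, relative hard Lefschetz for a relatively ample class, and Deligne's formal splitting --- which is indeed how Saito establishes the result. Two points deserve correction, though. First, the reduction to the projective case is not just ``local on $Y$ plus Chow's lemma'': Chow's lemma modifies $X$, not the morphism, producing $\pi\colon X'\to X$ projective birational with $f\circ\pi$ projective; to descend the statement to $f$ you must first apply the (projective) decomposition theorem to $\pi$ and use semisimplicity of polarizable pure Hodge modules to exhibit $M$ as a direct summand of some $H^0\pi_{\ast}M'$, so the reduction is a genuine bootstrapping step, not a localization. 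Second, you have located the main difficulty in the wrong place: Deligne's 1968 lemma says that for \emph{any} $K$ in the bounded derived category of an abelian category, a morphism $\eta\colon K\to K[2]$ inducing isomorphisms $\eta^{i}\colon H^{-i}K\to H^{i}K$ yields, purely formally, a non-canonical isomorphism $K\simeq\bigoplus_i H^iK[-i]$; no semisimplicity and no vanishing of $\Ext^1$ between pure objects of different weights is needed for this splitting. The entire substance of the proposition is therefore concentrated in relative hard Lefschetz for pure Hodge modules, which you take as a black box; its proof in Saito's theory (induction on dimension via nearby and vanishing cycles and the polarization) is the hard part, and the paper's attribution to Saito is effectively an attribution of exactly that input. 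With these two adjustments your sketch is an accurate reconstruction of the known proof.
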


\begin{corollary}\label{ssmhm}
Let $f:X\to Y$ be a proper smooth morphism of quasi-projective smooth varieties over $k\subset \mathbb{C}$. Then there is a Leray spectral sequence
\[
E^{p,q}_{2}=\Ext^{p}_{\text{MHM}(Y)}(\mathbb{Q}_{Y}(0),R^{q}f_{\ast}M)\Rightarrow \Ext_{\text{MHM}(X)}^{p+q}(\mathbb{Q}_{X}(0),M),
\]
which degenerates at $E_2$, for any pure $M\in \text{MHM}(X)$.
\end{corollary}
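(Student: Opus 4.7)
The plan is to combine three standard ingredients in the six-functor formalism for mixed Hodge modules: the adjunction $f^* \dashv f_*$, the truncation (hyperderived) spectral sequence for a bounded complex in $D^b\text{MHM}(Y)$, and Proposition \ref{dec}, which supplies the splitting that forces $E_2$-degeneration.

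First, I would use the identity $f^*\mathbb{Q}_Y(0) \cong \mathbb{Q}_X(0)$ in $D^b\text{MHM}(X)$ (valid for any morphism of smooth varieties, and in particular for the smooth proper morphism $f$) together with the adjunction between $f^*$ and $f_*$ (where $f_* = f_!$ since $f$ is proper) to obtain
\[
\Ext^{n}_{\text{MHM}(X)}(\mathbb{Q}_X(0), M) \;\cong\; \Hom_{D^b\text{MHM}(Y)}(\mathbb{Q}_Y(0),\, f_* M[n]).
\]
Next, for any $N^\bullet \in D^b\text{MHM}(Y)$ the truncation filtration $\tau_{\leq q} N^\bullet$ yields the standard hyperext spectral sequence
\[
E_2^{p,q} = \Ext^p_{\text{MHM}(Y)}(\mathbb{Q}_Y(0),\, H^q(N^\bullet)) \;\Longrightarrow\; \Hom_{D^b\text{MHM}(Y)}(\mathbb{Q}_Y(0),\, N^\bullet[p+q]).
\]
Specializing to $N^\bullet = f_* M$ and identifying $H^q(f_* M)$ with $R^q f_* M$ produces the Leray-type spectral sequence in the corollary.

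Finally, for the $E_2$-degeneration I would invoke Proposition \ref{dec}: since $f$ is proper and $M \in \text{MHM}(X)$ is pure, there is a non-canonical isomorphism $f_* M \cong \bigoplus_i R^i f_* M\,[-i]$ in $D^b\text{MHM}(Y)$. Applying the additive functor $\Hom_{D^b\text{MHM}(Y)}(\mathbb{Q}_Y(0), -[n])$ to this decomposition yields a direct sum of groups that match the $E_2$-terms of total degree $n$, leaving no room for nonzero differentials $d_r$ with $r \geq 2$. The main conceptual obstacle is to verify that the formal splitting provided by Proposition \ref{dec} is compatible with the truncation filtration that builds the spectral sequence, so that the splitting of the derived-category object translates into a splitting of the filtered structure rather than merely an abstract isomorphism of the cohomology groups; once that compatibility is recorded, the degeneration is automatic and the corollary follows.
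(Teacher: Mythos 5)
Your proposal follows essentially the same route as the paper: the spectral sequence itself comes from the standard formalism (the paper declares its existence ``clear,'' while you spell it out via adjunction and the truncation filtration), and degeneration at $E_2$ is deduced from the decomposition of Proposition \ref{dec}. The compatibility issue you flag at the end is precisely what the paper's appeal to Deligne's degeneration criterion is meant to absorb, so the two arguments coincide in substance.
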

\begin{proof}
The existence of the Leray filtration on $\Ext_{\text{MHM}(X)}^{p+q}(\mathbb{Q}_{X}(0),M)$ such that 
\[
E^{p,q}_{2}=\Ext^{p}_{\text{MHM}(Y)}(\mathbb{Q}_{Y}(0),R^{q}f_{\ast}M)
\]
is clear. Then we use the decomposition in proposition \eqref{dec} and apply Deligne's criterion to conclude that the Leray spectral sequence associated to the Leray filtration degenerates at $E_2$.
\end{proof}

This spectral sequence induces a canonical Leray filtration on $\Ext_{\text{MHM}(X)}^{p+q}(\mathbb{Q}_{X}(0),M)$. 

We also have the following short exact sequence:

\begin{proposition}\label{sequence}
Let $Y$ be a smooth quasiprojective variety over $k\subset \mathbb{C}$, $M\in \text{MHM}(Y)$ and $g:Y\to \Spec(k)$ be the natural morphism. Then there exists a short exact sequence
\begin{multline*}
0\to \Ext^{1}_{\text{MHM}(\Spec(k))}(\mathbb{Q}_{\Spec(k)}(0), R^{q-1}g_{\ast}M)
\to \\\Ext^{q}_{\text{MHM}(Y)}(\mathbb{Q}_Y(0), M)\to
\hom_{\text{MHM}(\Spec(k))}(\mathbb{Q}_{\Spec(k)}(0), R^{q}g_{\ast}M)\to 0
\end{multline*}
\end{proposition}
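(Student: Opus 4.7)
The plan is to exhibit this as a degenerate case of a Grothendieck--Leray spectral sequence. The key structural input is Beilinson's fact that the category of graded polarizable mixed Hodge structures, which is $\text{MHM}(\Spec(k))$, has cohomological dimension one, i.e.\ $\Ext^p_{\text{MHM}(\Spec(k))}(\mathbb{Q}(0),-)=0$ for all $p\ge 2$.

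First I would identify $\Hom_{\text{MHM}(Y)}(\mathbb{Q}_Y(0),-)$ with the composition $\Hom_{\text{MHM}(\Spec(k))}(\mathbb{Q}(0),-)\circ g_{\ast}$, using that $\mathbb{Q}_Y(0)\simeq g^{\ast}\mathbb{Q}_{\Spec(k)}(0)$ (up to the appropriate shift) and adjunction $(g^{\ast},g_{\ast})$ in $D^b\text{MHM}$. Passing to derived functors and applying Grothendieck's theorem on composition of derived functors yields a convergent spectral sequence
\[
E_2^{p,q}=\Ext^p_{\text{MHM}(\Spec(k))}\bigl(\mathbb{Q}_{\Spec(k)}(0),\,R^q g_{\ast}M\bigr)\Rightarrow\Ext^{p+q}_{\text{MHM}(Y)}(\mathbb{Q}_Y(0),M).
\]

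Next I would exploit the cohomological dimension. Since $E_2^{p,q}=0$ for $p\ge 2$, only the columns $p=0$ and $p=1$ can contribute. The differential $d_r$ on $E_r^{p,q}$ lands in $E_r^{p+r,q-r+1}$; for $r\ge 2$ the target always has first index $\ge 2$, hence vanishes. Therefore the spectral sequence degenerates at $E_2$ and $E_\infty=E_2$; note that this degeneration holds for any $M\in\text{MHM}(Y)$, without a purity hypothesis, in contrast to Corollary \ref{ssmhm}.

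Finally, the induced (Leray) filtration on the abutment $\Ext^q_{\text{MHM}(Y)}(\mathbb{Q}_Y(0),M)$ has at most two nontrivial graded pieces, namely $E_\infty^{0,q}=\hom_{\text{MHM}(\Spec(k))}(\mathbb{Q}(0),R^q g_{\ast}M)$ on top and $E_\infty^{1,q-1}=\Ext^1_{\text{MHM}(\Spec(k))}(\mathbb{Q}(0),R^{q-1}g_{\ast}M)$ at the bottom, giving exactly the asserted short exact sequence. The only real subtlety I anticipate is the construction and functoriality of the Leray spectral sequence in $D^b\text{MHM}$; this can either be cited from Saito's formalism directly or obtained by the standard argument with Cartan--Eilenberg resolutions once one has enough injectives (or, alternatively, by truncating $Rg_{\ast}M$ and using the two-column vanishing to reduce to an elementary distinguished triangle argument).
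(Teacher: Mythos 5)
Your proof is correct and reaches the short exact sequence by the same overall mechanism as the paper---a Leray-type spectral sequence with $E_2^{p,q}=\Ext^p_{\text{MHM}(\Spec(k))}(\mathbb{Q}_{\Spec(k)}(0),R^qg_{\ast}M)$ that degenerates and has only two nonzero columns---but you justify the degeneration differently, and your justification is the more robust one here. The paper obtains both the spectral sequence and its degeneration by citing Corollary \ref{ssmhm}, which rests on the decomposition theorem (Proposition \ref{dec}) and Deligne's criterion and is stated only for a \emph{proper smooth} morphism and a \emph{pure} module $M$; Proposition \ref{sequence}, however, allows an arbitrary $M\in\text{MHM}(Y)$ and a quasiprojective $Y$, for which $g$ need not be proper. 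Your observation that $E_2^{p,q}=0$ for $p\ge 2$ (cohomological dimension one of graded polarizable mixed Hodge structures) already kills every differential $d_r$ for $r\ge 2$, so degeneration holds without any purity or properness hypothesis, matching the actual generality of the statement; the paper uses the same $\Ext^{\ge 2}$-vanishing only afterwards, to cut the Leray filtration down to two graded pieces. The one point to be careful about in your write-up is the construction of the spectral sequence itself: $g_{\ast}$ on $D^b\text{MHM}$ is not literally the derived functor of a left-exact functor between the abelian categories, so the classical Grothendieck composite-functor theorem does not apply verbatim; your fallback---the adjunction $\Hom_{D^b\text{MHM}(Y)}(g^{\ast}\mathbb{Q}_{\Spec(k)}(0),M[n])\simeq\Hom_{D^b\text{MHM}(\Spec(k))}(\mathbb{Q}_{\Spec(k)}(0),g_{\ast}M[n])$ followed by the truncation/distinguished-triangle argument on the bounded complex $g_{\ast}M$---is the right way to make this precise, and is worth spelling out rather than leaving as an aside.
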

\begin{proof}
By Corollary \eqref{ssmhm} there is the Leray spectral sequence
\[
E^{p,q}_{2}=\Ext^{p}_{\text{MHM}(\Spec(k))}(\mathbb{Q}_{\Spec(k)}(0),R^{q}g_{\ast}M)\Rightarrow \Ext_{\text{MHM}(Y)}^{p+q}(\mathbb{Q}_{Y}(0),M),
\]
which degenerates at $E_{2}$. Then, if $L$ is the Leray filtration on $\Ext^{p+q}_{\text{MHM}(Y)}$,
\[
E^{p,q}_{2}=Gr^p_L\Ext_{\text{MHM}(Y)}^{p+q}(\mathbb{Q}_{Y}(0),M).
\]
On the other hand, $\text{MHM}(\Spec(k))$ is a subcategory of the category of mixed Hodge structures (see next section), and we know that $\Ext^{\ell}_{\text{MHS}}=0$ for all $\ell\geq 2$, and similarly for $\text{MHM}(\Spec(k))$. Then
\begin{equation*}
\begin{split}
L^1\Ext_{\text{MHM}(Y)}^{q}(\mathbb{Q}_{Y}(0),M)& =Gr^1_L\Ext_{\text{MHM}(Y)}^{q}(\mathbb{Q}_{Y}(0),M)\\
& =\Ext^{1}_{\text{MHM}(\Spec(k))}(\mathbb{Q}_{\Spec(k)}(0),R^{q-1}g_{\ast}M).
\end{split}
\end{equation*}
Since  
\[
\hom_{\text{MHM}(\Spec(k))}(\mathbb{Q}_{\Spec(k)}(0), R^{q}g_{\ast}M)=Gr^0_L\Ext_{\text{MHM}(Y)}^{q}(\mathbb{Q}_{Y}(0),M),
\]
we get that $\hom_{\text{MHM}(\Spec(k))}(\mathbb{Q}_{\Spec(k)}(0), R^{q}g_{\ast}M)$ is the quotient of \\ $\Ext_{\text{MHM}(Y)}^{q}(\mathbb{Q}_{Y}(0),M)$ and $\Ext^{1}_{\text{MHM}(\Spec(k))}(\mathbb{Q}_{\Spec(k)}(0),R^{q-1}g_{\ast}M)$, and this implies that we have the short exact sequence required.
\end{proof}

\section{Maps for Higher Chow Groups}

Let $X$ be a smooth variety over $k\subset \mathbb{C}$. Let´s denote the $r$-th higher Chow group of $X$ tensored with $\mathbb{Q}$ by $\text{CH}^{r}(X,m;\mathbb{Q})$. There is a cycle class map from this group to an extension of mixed Hodge modules. It was constructed by  M. Saito (see \cite{As} as well).

\begin{theorem}[M. Saito]\label{ccm}
Let $X$ be a smooth variety over $k\subset \mathbb{C}$. Then there exists a cycle map 
\[
c_{r,m}:\text{CH}^{r}(X,m;\mathbb{Q})\to \Ext_{\text{MHM}(X_{\mathbb{C}})}^{2r-m}(\mathbb{Q}_{X_{\mathbb{C}}}(0),\mathbb{Q}_{X_{\mathbb{C}}}(r)). 
\]
\end{theorem}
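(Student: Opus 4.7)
The plan is to follow M.\ Saito's construction via Bloch's cycle complex. Recall that $\text{CH}^r(X,m;\mathbb{Q})$ is the $m$-th homology of the complex $z^r(X,\bullet)\otimes\mathbb{Q}$, where $z^r(X,n)$ consists of codimension $r$ cycles on $X\times\Delta^n$ meeting all faces properly. The goal is therefore to attach to each admissible cycle an element in an appropriate Ext group of mixed Hodge modules, compatibly with the simplicial differential, and to pass to homology.

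For a single admissible $Z\in z^r(X,m)$, with closed support $|Z|\subset X\times\Delta^m$ of pure codimension $r$, I would use the fundamental class in mixed Hodge modules: by the regular holonomic structure and Saito's local-cohomology formalism, $Z$ gives rise to a class
\[
[Z]\in \Hom_{\text{MHM}(X\times\Delta^m)}\bigl(\mathbb{Q}(0),\,(i_{|Z|})_{\ast}(i_{|Z|})^{!}\mathbb{Q}(r)[2r]\bigr),
\]
which, since $|Z|$ meets every face properly, descends by adjunction to a class in the relative Ext of the pair $(X\times\Delta^m,\,X\times\partial\Delta^m)$. Composing with pushforward along the projection $X\times\Delta^m\to X$, using $\mathbb{A}^m$-homotopy invariance on the open part and an inclusion--exclusion analysis of the simple normal crossing divisor $\partial\Delta^m$, I would then convert this into the desired element of $\Ext^{2r-m}_{\text{MHM}(X)}(\mathbb{Q}_X(0),\mathbb{Q}_X(r))$.

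The main obstacle lies in this last step: although topologically $(\Delta^m,\partial\Delta^m)$ has the cohomology of $S^m$, implementing the corresponding $m$-fold shift purely within $D^b\text{MHM}$ is delicate, because algebraically $\Delta^m\cong\mathbb{A}^m$ and $\partial\Delta^m$ is a normal crossing divisor whose strata must be handled by a \v Cech--Mayer--Vietoris spectral sequence. Following Saito, the key is to verify that, when restricted to Bloch-admissible cycles, only the top iterated residue contributes, and that this contribution pairs with $\mathbb{Q}(r)$ to produce precisely an element of $\Ext^{2r-m}$.

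Finally, once $[Z]\in\Ext^{2r-m}_{\text{MHM}(X)}(\mathbb{Q}_X(0),\mathbb{Q}_X(r))$ is defined, one checks that cycles in the image of the Bloch differential $\partial=\sum(-1)^i\partial_i^{\ast}:z^r(X,m+1)\to z^r(X,m)$ produce trivial classes; this is a formal consequence of the functoriality of the MHM fundamental class under restriction to faces, and shows that $c_{r,m}$ is well defined on $\text{CH}^r(X,m;\mathbb{Q})$. Tensoring with $\mathbb{Q}$ throughout allows us to invoke Corollary \ref{ssmhm} freely when comparing the simplicial and cohomological pictures.
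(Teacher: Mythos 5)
The paper itself does not prove this statement: it is quoted as a theorem of M.~Saito, with the construction attributed to \cite{SaitoM4} and to Asakura \cite{As}, so there is no internal proof to compare your attempt against. Your outline does follow the broad strategy of those references --- Bloch's cycle complex, fundamental classes of admissible cycles supported on $X\times\Delta^m$, and the relative cohomology of the pair $(\Delta^m,\partial\Delta^m)$ supplying the shift by $m$ --- so the skeleton is the right one.

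As a proof, however, it has genuine gaps. First, both the step ``descends by adjunction to a class in the relative Ext'' and the later claim that compatibility with the Bloch differential is ``a formal consequence of functoriality'' rest silently on the semipurity vanishing $\mathcal{H}^{j}\,i_{|Z|}^{!}\,\mathbb{Q}(r)=0$ for $j<2r$ (equivalently $H^{j}_{|Z|}(X\times\Delta^{m},\mathbb{Q}(r))=0$ for $j<2r$ when $|Z|$ has codimension at least $r$); this is precisely what makes the fundamental class independent of the choice of support, compatible with proper intersection with the faces, and well defined at all. It must be stated and used, not assumed. Second, well-definedness on $\text{CH}^{r}(X,m;\mathbb{Q})$ is not obtained by checking that individual boundaries give zero classes: the class of $W\in z^{r}(X,m+1)$ lives on $X\times\Delta^{m+1}$ while the classes of its faces live on $X\times\Delta^{m}$, so one has to assemble all the fundamental classes into a morphism from the cycle complex to a simplicial or \v{C}ech-type complex computing $\Ext^{\bullet}_{\text{MHM}(X_{\mathbb{C}})}(\mathbb{Q}_{X_{\mathbb{C}}}(0),\mathbb{Q}_{X_{\mathbb{C}}}(r))$ and only then pass to homology; organizing this coherently (including the inclusion--exclusion over the strata of $\partial\Delta^m$ that you mention) is the actual technical content of the construction, not a formality. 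Finally, your closing appeal to Corollary \ref{ssmhm} is out of place: that corollary concerns the Leray spectral sequence of a proper smooth morphism and plays no role in comparing the simplicial and cohomological pictures; tensoring with $\mathbb{Q}$ is needed only to use the moving lemma and to kill torsion phenomena, which is a different matter.
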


If we consider the  natural morphism $g:X_{\mathbb{C}}\to \Spec(\mathbb{C})$, we can use lemma \eqref{sequence} to get a short exact sequence
\begin{multline*}\label{ses1}
0\to \Ext^{1}_{\text{MHM}(\Spec(\mathbb{C}))}(\mathbb{Q}_{\Spec(\mathbb{C})}(0), R^{2r-m-1}g_{\ast}\mathbb{Q}_{X_{\mathbb{C}}}(r))\\
\to \Ext^{2r-m}_{\text{MHM}(X_{\mathbb{C}})}(\mathbb{Q}_{X_{\mathbb{C}}}(0), \mathbb{Q}_{X_{\mathbb{C}}}(r))\to \\
\hom_{\text{MHM}(\Spec(\mathbb{C}))}(\mathbb{Q}_{\Spec(\mathbb{C})}(0), R^{2r-m}g_{\ast}\mathbb{Q}_{X_{\mathbb{C}}}(r))\to 0
\end{multline*}
for $X$ a smooth quasiprojective variety over $k\subset \mathbb{C}$. 
Since $\text{MHM}(\Spec ({\mathbb{C}}))$ is isomorphic to the category of graded polarizable mixed Hodge structures, we have the isomorphisms
\[
\mathbb{Q}_{\Spec(\mathbb{C})}(0)\simeq \mathbb{Q}(0),
\]
\[
R^{2r-m-1}g_{\ast}\mathbb{Q}_{X_{\mathbb{C}}}(r)\simeq H^{q-1}(X,\mathbb{Q}(r)),
\]
\[
R^{2r-m}g_{\ast}\mathbb{Q}_{X_{\mathbb{C}}}(r)\simeq H^{q}(X,\mathbb{Q}(r)). 
\]
where $H^{\bullet}(X,\mathbb{Q}(r))$  means $H^{\bullet}(X_{\mathbb{C}}^{\text{an}},\mathbb{Q}(r))$, a notation we will continue to use in this paper. Then we can rewrite the short exact sequence as
\begin{multline}\label{extandhom}
0\to \Ext^{1}_{\text{MHS}}(\mathbb{Q}(0), H^{2r-m-1}(X,\mathbb{Q}(r)))
\to \\\Ext^{2r-m}_{\text{MHM}(X_{\mathbb{C}})}(\mathbb{Q}_{X_{\mathbb{C}}}(0), \mathbb{Q}_{X_{\mathbb{C}}}(r))\to
\hom_{\text{MHS}}(\mathbb{Q}(0), H^{2r-m}(X,\mathbb{Q}(r)))\to 0
\end{multline}
Using the  morphism of Theorem \eqref{ccm},
\[
c_{r,m}:\text{CH}^{r}(X,m;\mathbb{Q})\to \Ext_{\text{MHM}(X_{\mathbb{C}})}^{2r-m}(\mathbb{Q}_{X_{\mathbb{C}}}(0),\mathbb{Q}_{X_{\mathbb{C}}}(r)), 
\]
we have a map
\begin{equation}\label{map1}
\text{CH}^{r}(X,m;\mathbb{Q})\to \hom_{\text{MHS}}(\mathbb{Q}(0), H^{2r-m}(X,\mathbb{Q}(r))).
\end{equation}
We denote by $[\xi]$ the image of $\xi \in  \text{CH}^{r}(X,m;\mathbb{Q})$ in $\hom_{\text{MHS}}(\mathbb{Q}(0), H^{2r-m}(X,\mathbb{Q}(r)))$
and if this image is zero (this is always the case when $X$ is complex projective and $m>0$) then $\xi$ maps to an element of $\Ext^{1}_{\text{MHS}}(\mathbb{Q}(0),H^{2r-m-1}(X,\mathbb{Q}(r)))$ as we can see from \eqref{extandhom}. We call this map $AJ$. The kernel of the \eqref{map1} is denoted by $\text{CH}^{r}_{\text{hom}}(X,m;\mathbb{Q})$. 

\begin{definition}\label{aj}
The map
\[
AJ:\text{CH}^{r}_{\text{hom}}(X,m;\mathbb{Q})\to \Ext^{1}_{\text{MHS}}(\mathbb{Q}(0),H^{2r-m-1}(X,\mathbb{Q}(r))).
\]
is called the Abel-Jacobi map.
\end{definition}

This definition of $AJ(\xi)$ coincides with the definition of higher Abel-Jacobi maps defined by integration of currents when $X$ is smooth projective over $\mathbb{C}$; it is proved by Kerr, Lewis and Muller-Stach in \cite{KLMS}, see also \cite{KL}.

\section{The extension associated to a family}

Let $X$ be a smooth projective variety over $\mathbb{C}$, $g:X\to\mathbb{C}$ the natural morphism. Then, in the previous section we constructed a map
\begin{equation}\label{ajtomhm}
\xymatrix{
\text{CH}^{r}_{\text{hom}}(X,m;\mathbb{Q}) \ar[dr]_{AJ} \ar[r]& \Ext^{1}_{\text{MHM}(\Spec(\mathbb{C}))}(\mathbb{Q}_{\Spec(\mathbb{C})}(0), R^{2r-m-1}g_{\ast}\mathbb{Q}_X(r))\ar[d]^{=}\\
& \Ext^{1}_{\text{MHS}}(\mathbb{Q}(0),H^{2r-m-1}(X,\mathbb{Q}(r))).
}
\end{equation}
Thus, to any $\xi \in \text{CH}^{r}_{\text{hom}}(X,m;\mathbb{Q})$ we can associate an extension
\[
\mathbf{E}\in \Ext^{1}_{\text{MHS}}(\mathbb{Q}(0),H^{2r-m-1}(X,\mathbb{Q}(r))).
\]
Since we are working in the category of mixed Hodge modules, we should note that these extension groups are defined as extensions in the categorical sense. For our interests, we would like to have a different point of view. More precisely, that $\mathbf{E}$ is an extension in the sense of Yoneda is a consequence of the following theorem of Verdier, \cite{Verdier} (see also Gelfand-Manin \cite{GelfandManin} or Peters-Steenbrink \cite{PetersSteenbrink} ):
\begin{proposition}\label{equivalenceext}
If  $\mathcal{A}$ is an abelian category then for any $A$ and $B$ objects in $\mathcal{A}$, $\Ext^n(A,B)$  is the same as the Yoneda extension group.
\end{proposition}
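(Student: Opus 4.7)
The plan is to compare the Yoneda description of $\Ext^n(A,B)$ with Verdier's derived-category definition $\Hom_{D^b(\mathcal{A})}(A,B[n])$, which makes sense in any abelian category and in particular does not require $\mathcal{A}$ to have enough injectives or projectives. Recall that the Yoneda group $\Ext^n_Y(A,B)$ consists of equivalence classes of exact sequences
\[
0\to B\to E_n\to\cdots\to E_1\to A\to 0,
\]
with equivalence generated by morphisms of such sequences that are the identity on $A$ and $B$, and with Baer sum as the group law.

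First I would construct a natural map $\Phi\colon\Ext^n_Y(A,B)\to\Hom_{D^b(\mathcal{A})}(A,B[n])$. Given an $n$-fold extension $\varepsilon$ as above, I associate the bounded complex $E^\bullet$ with $E_i$ placed in degree $-i+1$ and differentials those of the sequence. The augmentation $E^\bullet\to A$ is a quasi-isomorphism, while $E^\bullet$ also admits a canonical morphism to $B[n]$ coming from $E_n\to B$. Inverting the quasi-isomorphism in $D^b(\mathcal{A})$ produces a morphism $A\to B[n]$, which I define to be $\Phi(\varepsilon)$. The five-lemma applied to a common refinement of two equivalent Yoneda extensions shows that $\Phi$ depends only on the equivalence class.

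For the inverse direction I would start with a morphism $f\colon A\to B[n]$ in $D^b(\mathcal{A})$ and build an $n$-fold extension. The case $n=1$ is immediate: the distinguished triangle $B\to \operatorname{cone}(f)[-1]\to A\to B[1]$ gives a short exact sequence in $\mathcal{A}$. For general $n$ I would proceed by induction, factoring $f$ as a composition $A\xrightarrow{g}B'[n-1]\to B[n]$, where the second map is produced by a $1$-extension of $B'$ by $B$; splicing the inductively obtained $(n-1)$-extension of $A$ by $B'$ with this $1$-extension yields the desired $n$-extension. Showing that this construction is inverse to $\Phi$ and intertwines Yoneda composition with composition in the derived category proves bijectivity, while a direct comparison confirms that Baer sum corresponds to the abelian group structure on $\Hom_{D^b(\mathcal{A})}(A,B[n])$.

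The main obstacle is making the inductive decomposition in the inverse direction precise and well-defined up to Yoneda equivalence. Every morphism $f\colon A\to B[n]$ has to be factored through an intermediate object of the form $B'[n-1]$, which requires choosing a roof representative $A\xleftarrow{\sim}C^\bullet\to B[n]$ in $D^b(\mathcal{A})$ and then extracting images and cokernels of the underlying complexes in a coherent way. Verifying that different roof choices produce Yoneda equivalent extensions again reduces to an application of the five-lemma at the level of complexes. Once this is established, the proposition follows, and consequently the extension $\mathbf{E}$ attached to a cycle $\xi$ can legitimately be viewed as a Yoneda extension.
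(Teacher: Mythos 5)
The paper itself does not prove this proposition; it is quoted from Verdier (see also Gelfand--Manin and Peters--Steenbrink), so what has to be assessed is whether your sketch of the standard comparison between $\Hom_{D^b(\mathcal{A})}(A,B[n])$ and Yoneda classes holds together. Your overall strategy is the right one, but the central construction of $\Phi$ is wrong as written. If $E^\bullet$ is the complex $[E_n\to\cdots\to E_1]$ with $E_i$ in degree $-i+1$, the augmentation $E^\bullet\to A$ is \emph{not} a quasi-isomorphism: for $n\geq 2$ exactness of the extension gives $H^{-n+1}(E^\bullet)=\ker(E_n\to E_{n-1})\cong B\neq 0$ (and for $n=1$ the map $E_1\to A$ has kernel $B$). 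Moreover the extension supplies a monomorphism $B\to E_n$, not a map $E_n\to B$, so the ``canonical morphism to $B[n]$'' you invoke does not exist; with your indexing a map out of the leftmost term would in any case land in degree $-n+1$, i.e.\ give a morphism to $B[n-1]$, not $B[n]$. The correct roof uses $K^\bullet=[B\to E_n\to\cdots\to E_1]$ placed in degrees $-n,\dots,0$: exactness of the whole sequence makes $K^\bullet\to A$ a quasi-isomorphism, and the identity of $B$ in degree $-n$ gives the chain map $K^\bullet\to B[n]$.

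The second genuine gap is the inverse direction, which is where the real content lies. Your proposed induction (factor $f\colon A\to B[n]$ through some $B'[n-1]$) presupposes exactly what must be proved, namely that such a factorization exists and is well defined up to Yoneda equivalence; you flag this as ``the main obstacle'' but do not resolve it. The standard resolution is to represent $f$ by a roof $A\xleftarrow{s}C^\bullet\xrightarrow{g}B[n]$ with $s$ a quasi-isomorphism, replace $C^\bullet$ by the truncation $\tau_{\geq -n}\tau_{\leq 0}C^\bullet$ (still quasi-isomorphic to $A$), and splice the resulting data into an $n$-fold extension by a pushout along the induced map $C^{-n}/\operatorname{im}(d)\to B$; injectivity and compatibility with Baer sum are then checked on these representatives. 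As it stands your argument establishes neither direction, so either these steps should be carried out in full or one should do as the paper does and simply cite Verdier.
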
 
Therefore $\mathbf{E}$ is an extension (more precisely an equivalence class of extensions) of mixed Hodge structures of the form:
\[
0\to H^{2r-m-1}(X,\mathbb{Q}(r))\to \mathbf{E}\to \mathbb{Q}(0)\to 0.
\]
As we can see in \eqref{ajtomhm}, $\mathbf{E}$ is automatically a mixed hodge module over $\Spec(\mathbb{C})$.

Now, let's work in the relative case, i.e. let's consider a family of smooth proyective varieties over $\mathbb{C}$, 
\begin{eqnarray} \label{familia}
\xymatrix{
X_t \ar[d] \ar[r] & \mathcal{X} \ar[d]^{f}\\
\Spec(\mathbb{C}) \ar[r]^(0.6){t} & S
}
\end{eqnarray}
where $f$ is smooth and proper with fibre $X_t=f^{-1}(t)$, for  $t\in S$ and $\mathcal{X}$, $S$ are defined over $k\subset \mathbb{C}$ algebraically closed. In the family each fibre is of the form $X_t=\mathcal{X}\times_S\Spec (\mathbb{C})$, where $k(t)=\mathbb{C}$ and we have natural morphisms $\Spec(\mathbb{C})\to S$, $X_t\to \Spec(\mathbb{C})$. 

From Corollary \eqref{ssmhm}, for $f:X\to Y$  a proper smooth morphism of quasi-projective smooth varieties over $k\subset \mathbb{C}$ there is a Leray spectral sequence
\[
E^{p,q}_{2}=\Ext^{p}_{\text{MHM}(Y)}(\mathbb{Q}_{Y}(0),R^{q}f_{\ast}M)\Rightarrow \Ext_{\text{MHM}(X)}^{p+q}(\mathbb{Q}_{X}(0),M),
\]
which degenerates at $E_2$, for any pure $M\in \text{MHM}(X)$. In particular we have a Leray filtration $L$ on $\Ext$. For $f:\mathcal{X}\to S$ a family of varieties over $\mathbb{C}$, then we have a map
\begin{multline*}
L^1\Ext^{2r-m}_{\text{MHM}(\mathcal{X})}(\mathbb{Q}_{\mathcal{X}}(0), \mathbb{Q}_{\mathcal{X}}(r))\to Gr^1_L\Ext^{2r-m}_{\text{MHM}(\mathcal{X})}(\mathbb{Q}_{\mathcal{X}}(0), \mathbb{Q}_{\mathcal{X}}(r)) \\
= \Ext^{1}_{\text{MHM}(S)}(\mathbb{Q}_{S}(0), R^{2r-m-1}f_{\ast}\mathbb{Q}_{\mathcal{X}}(r)).
\end{multline*}
where $Gr^1_L=L^1/L^2$. 
$L$ also induces a filtration $F_L$ on $\text{CH}^{r}(\mathcal{X},m;\mathbb{Q})$ given by \[F_{L}^{j}\text{CH}^{r}(\mathcal{X},m;\mathbb{Q}):= c_{r,m}^{-1}(L^{j}\Ext^{2r-m}_{\text{MHM}(X)}(\mathbb{Q}_{\mathcal{X}}(0), \mathbb{Q}_{\mathcal{X}}(r))),\] where $c_{r,m}$ is the cycle class map from Theorem \eqref{ccm}. Using the map above and the cycle class map we get
\[
F_L^1\text{CH}^{r}(\mathcal{X},m;\mathbb{Q})\to \Ext^{1}_{\text{MHM}(S)}(\mathbb{Q}_{S}(0), R^{2r-m-1}f_{\ast}\mathbb{Q}_{\mathcal{X}}(r)).
\]

Since the Leray spectral sequence is functorial, then from the diagram \ref{familia} and the pullback on $X_t\to {\mathcal{X}}$ we get the diagram
\begin{small}
\begin{equation}\label{diag1}
\xymatrix{
F_L^1\text{CH}^{r}(\mathcal{X},m;\mathbb{Q}) \ar[d] \ar[r] & F_L^1\text{CH}^{r}(X_t,m;\mathbb{Q}) \ar[d]\\
\Ext^{1}_{\text{MHM}(S)}(\mathbb{Q}_{S}(0), R^{2r-m-1}f_{\ast}\mathbb{Q}_{\mathcal{X}}(r)) \ar[r] &  \Ext^{1}_{\text{MHM}(\Spec(\mathbb{C}))}(\mathbb{Q}_{\Spec(\mathbb{C})}(0), R^{2r-m-1}f_{\ast}\mathbb{Q}_{X_t}(r))
}
\end{equation}
\end{small}
We can relate this construction to the previous maps in the following:
\begin{proposition}
The right vertical map in \eqref{diag1} is the Abel-Jacobi map for $m\geq 1$.
\end{proposition}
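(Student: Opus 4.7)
The plan is to verify that, after the appropriate identifications, the right vertical map of \eqref{diag1} coincides by construction with the Abel--Jacobi map $AJ$ of Definition \eqref{aj} specialized to $X_t$. Both maps are built by composing the cycle class map $c_{r,m}$ with the extraction of the $\Ext^{1}$ summand from the Leray spectral sequence for the structural morphism $g_{t}:X_{t}\to\Spec(\mathbb{C})$, so the task reduces to three routine checks: matching the domains, matching the targets, and verifying that the two compositions are literally the same.

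For the domains, the key observation is that under the hypotheses ($m\geq 1$ and $X_{t}$ smooth projective), the cohomology $H^{2r-m}(X_{t},\mathbb{Q}(r))$ is a pure Hodge structure of weight $-m<0$, so it admits no nonzero morphism from $\mathbb{Q}(0)$; hence $\hom_{\text{MHS}}(\mathbb{Q}(0),H^{2r-m}(X_{t},\mathbb{Q}(r)))=0$. Applied to $X_t$, the short exact sequence \eqref{extandhom} then forces
\[
L^{1}\Ext^{2r-m}_{\text{MHM}(X_{t})}(\mathbb{Q}_{X_{t}}(0),\mathbb{Q}_{X_{t}}(r)) = \Ext^{2r-m}_{\text{MHM}(X_{t})}(\mathbb{Q}_{X_{t}}(0),\mathbb{Q}_{X_{t}}(r)),
\]
and therefore $F_{L}^{1}\text{CH}^{r}(X_{t},m;\mathbb{Q}) = \text{CH}^{r}(X_{t},m;\mathbb{Q}) = \text{CH}^{r}_{\text{hom}}(X_{t},m;\mathbb{Q})$, matching the domain of $AJ$.

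For the targets, the isomorphism between $\text{MHM}(\Spec(\mathbb{C}))$ and the category of graded polarizable mixed Hodge structures, together with $R^{2r-m-1}(g_{t})_{*}\mathbb{Q}_{X_{t}}(r)\simeq H^{2r-m-1}(X_{t},\mathbb{Q}(r))$, yields the canonical identification
\[
\Ext^{1}_{\text{MHM}(\Spec(\mathbb{C}))}(\mathbb{Q}_{\Spec(\mathbb{C})}(0), R^{2r-m-1}(g_{t})_{*}\mathbb{Q}_{X_{t}}(r)) \simeq \Ext^{1}_{\text{MHS}}(\mathbb{Q}(0), H^{2r-m-1}(X_{t},\mathbb{Q}(r))),
\]
which is precisely the identification already invoked along the right edge of \eqref{ajtomhm}.

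With domains and targets matched, both maps are by construction the composition \emph{``apply $c_{r,m}$, then project onto $L^{1}/L^{2}$''} coming from Proposition \eqref{sequence} applied to $g_{t}$: the right vertical map of \eqref{diag1} is this composition by the definition of $F_{L}^{1}$ and of the Leray edge map, while $AJ$ is the same composition by the definition given in \eqref{ajtomhm}. Thus the two maps coincide. There is no serious obstacle; the only substantive point is the weight computation identifying $F_{L}^{1}\text{CH}^{r}(X_{t},m;\mathbb{Q})$ with $\text{CH}^{r}_{\text{hom}}(X_{t},m;\mathbb{Q})$, which is exactly where the hypothesis $m\geq 1$ enters.
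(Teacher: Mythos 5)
Your proposal is correct and follows essentially the same route as the paper's proof: the weight argument showing $\hom_{\text{MHS}}(\mathbb{Q}(0),H^{2r-m}(X_t,\mathbb{Q}(r)))=0$ for $m\geq 1$ (hence $F_L^1\text{CH}^{r}(X_t,m;\mathbb{Q})=\text{CH}^{r}(X_t,m;\mathbb{Q})=\text{CH}^{r}_{\text{hom}}(X_t,m;\mathbb{Q})$), the identification of $\text{MHM}(\Spec(\mathbb{C}))$ with graded polarizable mixed Hodge structures to match the targets, and the observation that both maps are the composite of $c_{r,m}$ with the Leray edge projection. Your explicit computation of the weight as $-m<0$ makes the paper's appeal to ``weight reasons'' slightly more transparent, but the argument is the same.
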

\begin{proof}
We have an injective map
\[
Gr^0_{F_L}\text{CH}^{r}(X_t,m;\mathbb{Q})\hookrightarrow Gr^0_{L} \Ext^{2r-m}_{\text{MHM}(X_t)}(\mathbb{Q}_{X_t}(0), \mathbb{Q}_{X_t}(r)).
\]
But
\[
 Gr^0_{L} \Ext^{2r-m}_{\text{MHM}(X_t)}(\mathbb{Q}_{X_t}(0), \mathbb{Q}_{X_t}(r))=  \Ext^{0}_{\text{MHM}(\Spec(\mathbb{C}))}(\mathbb{Q}_{\Spec(\mathbb{C})}(0), R^{2r-m}f_{\ast}\mathbb{Q}_{X_t}(r))
\]
and
\[
\Ext^{0}_{\text{MHM}(\Spec(\mathbb{C}))}(\mathbb{Q}_{\Spec(\mathbb{C})}(0), R^{2r-m}f_{\ast}\mathbb{Q}_{X_t}(r))=\hom_{\text{MHS}}(\mathbb{Q}(0), H^{2r-m}(X_t, \mathbb{Q}(r)))
\]

Recall from the morphism $\Spec(\mathbb{C}) \to S$ that we have a morphism \\MHM$(S)\to$MHM$(\Spec(\mathbb{C}))$
and MHM$(\Spec(\mathbb{C}))$ is the category MHS. That 
\[
\Ext^{1}_{\text{MHM}(\Spec(\mathbb{C}))}(\mathbb{Q}_{\Spec(\mathbb{C})}(0), R^{2r-m-1}f_{\ast}\mathbb{Q}_{X_t}(r))= \Ext^{1}_{\text{MHS}}(\mathbb{Q}(0), H^{2r-m-1}(X_t, \mathbb{Q}(r)))
\]
follows from this. Since $X_t$ is projective and because we are working in the category of mixed Hodge structures, for $m\geq 1$ by weight reasons:
\[
\hom_{\text{MHS}}(\mathbb{Q}(0), H^{2r-m}(X_t, \mathbb{Q}(r)))=0.
\]
Therefore
\[
Gr^0_{L} \Ext^{2r-m}_{\text{MHM}(X_t)}(\mathbb{Q}_{X_t}(0), \mathbb{Q}_{X_t}(r))=0
\]
so $L^0=L^1$ and $F_L^0=F_L^1$. This shows that $F_L^1\text{CH}^{r}(X_t,m;\mathbb{Q})=\text{CH}^{r}(X_t,m;\mathbb{Q})$. Also, from \eqref{extandhom}, for $X_t$ projective the map \eqref{map1} is always zero and therefore $\text{CH}^{r}(X_t,m;\mathbb{Q})=\text{CH}^{r}_{\text{hom}}(X_t,m;\mathbb{Q})$.
\end{proof}

In the proof above $F_L^0=F_L^1$ and $\text{CH}^{r}(X_t,m;\mathbb{Q})=\text{CH}^{r}_{\text{hom}}(X_t,m;\mathbb{Q})$ for any smooth projective variety when $m\geq 1$. So, we have a diagram 
\[
\xymatrix{
\text{CH}^{r}(\mathcal{X},m;\mathbb{Q}) \ar[d] \ar[r] & \text{CH}^{r}_{\text{hom}}(X_t,m;\mathbb{Q}) \ar[d]^{AJ}\\
\Ext^{1}_{\text{MHM}(S)}(\mathbb{Q}_{S}(0), R^{2r-m-1}f_{\ast}\mathbb{Q}_{\mathcal{X}}(r)) \ar[r] & \Ext^{1}_{\text{MHS}}(\mathbb{Q}(0), H^{2r-m-1}(X_t, \mathbb{Q}(r))).
}
\]

If we choose a cycle $\xi \in \text{CH}^{r}(\mathcal{X},m;\mathbb{Q})$ and $\xi_t=\xi|_{X_t}$ we get maps
\[
\text{CH}^{r}_{\text{hom}}(X_t,m;\mathbb{Q})\to \Ext^{1}_{\text{MHM}(\Spec(\mathbb{C}))}(\mathbb{Q}_{\Spec(\mathbb{C})}(0), R^{2r-m-1}f_{\ast}\mathbb{Q}_{\mathcal{X}_t}(r))
\]
and every $\xi_t\in \text{CH}^{r}_{\text{hom}}(X_t,m;\mathbb{Q})$ maps to an extension $\mathbf{E}_t$ of mixed Hodge modules.
Moreover, the morphism $\Spec{\mathbb{C}}\to S$ induces the bottom map in the diagram
\begin{footnotesize}
\[
\Ext^{1}_{\text{MHM}(S)}(\mathbb{Q}_{S}(0),R^{2r-m-1}f_{\ast}\mathbb{Q}_{\mathcal{X}}(r))\to \Ext^{1}_{\text{MHM}(\Spec(\mathbb{C}))}(\mathbb{Q}_{\Spec(\mathbb{C})}(0), R^{2r-m-1}f_{\ast}\mathbb{Q}_{\mathcal{X}_t}(r)).
\]
\end{footnotesize}

In summary, we have the following:
\begin{theorem}\label{teorema1}
To any smooth projective family $f:\mathcal{X}\to S$ over $\mathbb{C}$ and  cycle $\xi \in \text{CH}^{r}(\mathcal{X},m;\mathbb{Q})$, $m\geq 1$, we can associate a mixed Hodge module $\mathcal{E}$ that is an extension
\[
0\to R^{2r-m-1}f_{\ast}\mathbb{Q}_{\mathcal{X}}(r)\to \mathcal{E} \to \mathbb{Q}_{S}(0) \to 0
\]
of mixed Hodge modules, that restricts to the extensions given by the Abel-Jacobi map in the fibres.
\end{theorem}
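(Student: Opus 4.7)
The plan is to assemble the pieces constructed in the previous sections into a single packaged statement. Given $\xi\in \text{CH}^{r}(\mathcal{X},m;\mathbb{Q})$, I first apply the Saito cycle class map $c_{r,m}$ of Theorem \ref{ccm} to obtain a class in $\Ext^{2r-m}_{\text{MHM}(\mathcal{X})}(\mathbb{Q}_{\mathcal{X}}(0),\mathbb{Q}_{\mathcal{X}}(r))$. The substantive core of the argument is to check that this class lies in the first step $L^{1}$ of the Leray filtration associated to $f$, so that its image in
\[
Gr^{1}_{L}\Ext^{2r-m}_{\text{MHM}(\mathcal{X})}(\mathbb{Q}_{\mathcal{X}}(0),\mathbb{Q}_{\mathcal{X}}(r))=\Ext^{1}_{\text{MHM}(S)}(\mathbb{Q}_{S}(0), R^{2r-m-1}f_{\ast}\mathbb{Q}_{\mathcal{X}}(r))
\]
defines the desired class $\mathcal{E}$; the equality here is the $E_{2}$-degeneration of Corollary \ref{ssmhm}. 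Proposition \ref{equivalenceext} then translates $\mathcal{E}$ into a genuine Yoneda extension, producing the short exact sequence in the statement as an honest extension of mixed Hodge modules on $S$.

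The key point is the vanishing of the $Gr^{0}_{L}$-component, i.e.\ of the image of $c_{r,m}(\xi)$ in
\[
\hom_{\text{MHM}(S)}\bigl(\mathbb{Q}_{S}(0), R^{2r-m}f_{\ast}\mathbb{Q}_{\mathcal{X}}(r)\bigr).
\]
I argue stalkwise. For each $t\in S$, naturality of the Leray spectral sequence embodied in diagram \eqref{diag1}, combined with smooth and proper base change identifying the stalk of $R^{2r-m}f_\ast\mathbb{Q}_{\mathcal{X}}(r)$ at $t$ with $H^{2r-m}(X_{t},\mathbb{Q}(r))$, shows that the stalk at $t$ of this morphism of MHM coincides with the image of $\xi_{t}:=\xi|_{X_t}$ under the fibrewise map \eqref{map1}. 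Since $X_{t}$ is smooth projective and $m\geq 1$, the Hodge structure $H^{2r-m}(X_{t},\mathbb{Q}(r))$ is pure of weight $-m<0$, so $\hom_{\text{MHS}}(\mathbb{Q}(0), H^{2r-m}(X_{t},\mathbb{Q}(r)))=0$ by the same weight argument already invoked in the preceding proposition. Hence the underlying morphism of local systems vanishes on every stalk, so the morphism of mixed Hodge modules is zero and $c_{r,m}(\xi)\in L^{1}$ as needed.

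Compatibility with the fibrewise Abel-Jacobi map is then bookkeeping from diagram \eqref{diag1}: pulling $\mathcal{E}$ back along $t:\Spec(\mathbb{C})\to S$ and using the isomorphism $\text{MHM}(\Spec(\mathbb{C}))\simeq \text{MHS}$ yields a class in $\Ext^{1}_{\text{MHS}}(\mathbb{Q}(0), H^{2r-m-1}(X_{t},\mathbb{Q}(r)))$ which, by the commutativity of \eqref{diag1} and the identification of its right vertical arrow with $AJ$ in the preceding proposition, equals $AJ(\xi_{t})$. The principal obstacle, to my mind, is not any individual step but coordinating three compatibilities at once: functoriality of the Leray filtration under $f$, base change along the closed points $t\in S$ for the higher direct images, and Verdier's translation of abstract $\Ext^{1}$-classes into Yoneda extensions. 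Once these are laid out the theorem follows by combining the pieces already built in the paper.
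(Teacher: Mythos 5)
Your proposal is correct and follows essentially the same route as the paper: apply Saito's cycle class map, use the $E_2$-degeneration of the Leray spectral sequence from Corollary \ref{ssmhm} to land in $Gr^1_L=\Ext^1_{\text{MHM}(S)}(\mathbb{Q}_S(0),R^{2r-m-1}f_*\mathbb{Q}_{\mathcal{X}}(r))$ after killing the $Gr^0_L$-component by the weight argument for $m\geq 1$, invoke Proposition \ref{equivalenceext} to read the class as a Yoneda extension, and use the functoriality encoded in diagram \eqref{diag1} to identify the fibrewise restrictions with $AJ(\xi_t)$. Your stalkwise verification that $c_{r,m}(\xi)$ lies in $L^1$ over $S$ is, if anything, slightly more explicit than the paper's, which carries out the weight vanishing on a single fibre and then asserts $F_L^0=F_L^1$ in general.
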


The extension of mixed Hodge modules that captures the family is, precisely, in
\[
\Ext^{1}_{\text{MHM}(S)}(\mathbb{Q}_{S}(0), R^{2r-m-1}f_{\ast}\mathbb{Q}_{\mathcal{X}}(r)).
\]
Let $\text{MF}_{rh}(Y)$ denote the category of regular holonomic $D_Y$-modules with a good filtration, where $Y$ is a smooth variety over a closed field $k$ of characteristic $0$. We can map any mixed Hodge module to its corresponding filtered regular holonomic $D_Y$-module. In particular, for $f:\mathcal{X}\to S$, $\mathbb{Q}_{S}(0)$ maps to $\mathcal{O}_S$ and $R^{2r-m-1}f_{\ast}\mathbb{Q}_{\mathcal{X}}(r)$ to 
\[
\mathbb{R}^{2r-m-1}f_{\ast}\Omega ^{\bullet }_{\mathcal{X}/S}(r)=\mathcal{H}^{2r-m-1}_{DR}(\mathcal{X}/S)(r).\]
$\Omega ^{\bullet }_{\mathcal{X}/S}$ denotes the complex of relative differentials of $\mathcal{X}$ over $S$ and $\mathcal{H}^{q}_{DR}(\mathcal{X}/S)$ is the $q$-th relative de Rham cohomology sheaf of $\mathcal{X}$ over $S$ that by definition is
\[
\mathcal{H}^{q}_{DR}(\mathcal{X}/S):=\mathbb{R}^{q}f_{\ast}\Omega^{\bullet}_{\mathcal{X}/S}
\]
for any $\mathcal{X}$ and $S$ smooth quasiprojective varieties over $k$ and $f:\mathcal{X}\to S$ a proper smooth morphism. Then we have a natural map
\begin{small}\[
\Ext^{1}_{\text{MHM}(S)}(\mathbb{Q}_{S}(0),R^{2r-m-1}f_{\ast}\mathbb{Q}_{\mathcal{X}}(r)) \to \Ext^{1}_{\text{MF}_{\text{rh}}(S)}(\mathcal{O}_S,\mathcal{H}^{2r-m-1}_{DR}(\mathcal{X}/S)(r)).
\]\end{small}
The extension $\mathcal{E}$ (we denote the correponding $D_S$-module also by $\mathcal{E}$), can be seen as an extension of $D_S$-modules
\begin{eqnarray} \label{nose}
0\to \mathcal{H}^{2r-m-1}_{DR}(\mathcal{X}/S)(r) \to \mathcal{E} \to \mathcal{O}_S \to 0.
\end{eqnarray}

\section{Extensions and normal functions}

Let $\overline{\pi}:\overline{\mathcal{X}}\to \overline{S}$  be a flat morphism of relative dimension $k$ between proper, irreducible smooth varieties defined over $\mathbb{C}$. Assume dim $(\overline{S}) = 1$ and let $\mathcal{X}\subset \overline{\mathcal{X}}$, $S\subset \overline{S}$ be open dense subsets such that $\pi:=\overline{\pi}|_{\mathcal{X}}:\mathcal{X}\to S$ is smooth. If $\mathbb{H}$ is the local system over $S$ associated to the primitive part of $R^k\pi_*\mathbb{Q}$ and $\mathcal{H}=\mathbb{H}\otimes\mathcal{O}_S$, then the Gauss-Manin connection $\nabla:\mathcal{H}\to\mathcal{H}\otimes \Omega^1_S$ induces a differential equation
\begin{eqnarray}\label{PF}
D_{PF}\; f =0
\end{eqnarray}
called the Picard-Fuchs equation of the periods of the family $\mathcal{X}\to S$.

Let $\{\omega_1,\dots ,\omega_n\}$ be a local base of solutions of $\ref{PF}$, $g$ be an holomorphic function on $S$, $h$ be a solution of the differential equation
\begin{eqnarray}\label{nlPF}
D_{PF}\; h = g
\end{eqnarray}
in a neighborhood $U$ of a point $t_0\in S$ and $\gamma$ be a closed path on $S$ with starting point $t_0$. If we extend $h$ along $\gamma$, we will get back another solution of \ref{nlPF}, which then looks like
\[
\gamma(h)= h + \sum_i a_i\omega_i
\]
Let $h_1$ be another solution of \ref{nlPF} in $U,$ then $h_1-h$ will be a solution of \ref{PF} and therefore they differ by a linear combination ${\displaystyle \sum_i c_i\omega_i}$. If the action of the monodromy $\pi_1(S, t_0)$ on the periods is given by $\rho$, then $\gamma$ will act on the periods by $M=\rho(\gamma)$ and we can extend $h_1$ along $\gamma$ as well, getting
\[
\gamma(h_1) = h+\sum_i a_i\omega_i + \sum_i c_i \rho(\gamma)(\omega_i) = h_1 + \sum_i a_i\omega_i + \sum_i c_i (M-I)\omega_i
\]
so that we can associate to \ref{nlPF} a well defined class (see \cite{stiller}, part II section 3)
\[
\alpha=[\gamma\to a_{\gamma}]\in H^1(\pi_1(S,t_0), H_0) = H^1(S,\mathbb{H}),
\]
where $H_0$ is the fiber of $\mathbb{H}$ at $t_0$ and $a_{\gamma}=(a_1, \dots, a_n)$.

If $\xi \in \text{CH}^{r}(\mathcal{X},m;\mathbb{Q})$ and $\xi_t=\xi|_{X_t}$, the Abel-Jacobi map \eqref{aj} let us define:

\begin{definition}
The normal function $\nu_{\xi}:S\to {\mathcal J}$ associated to $\xi$ is given by
\[
\nu_{\xi}(t)=AJ(\xi_t),
\]
where $\mathcal{J}$ fits in the short exact sequence
\[
0\to R^{2r-m-1}\pi_*\mathbb{Z}\to R^{2r-m-1}\pi_*\mathbb{C}\otimes\mathcal{O}_S/ F^{r}(R^{2r-m-1}\pi_*\mathbb{C}\otimes\mathcal{O}_S)\to \mathcal{J}\to 0
\]
and
${\mathcal J}_t = \Ext^{1}_{\text{MHS}}(\mathbb{Q}(0), H^{2r-m-1}(X_t, \mathbb{Q}(r)))$.
\end{definition}

If $k=2r-m-1$, let $\overline{\nu_{\xi}}$ be a lifting of $\nu_{\xi}$ to $R^{k}\pi_*\mathbb{C}\otimes\mathcal{O}_S/ F^r(R^{k}\pi_*\mathbb{C}\otimes\mathcal{O}_S)$ and let us assume that $\overline{\nu_{\xi}}$ is not a solution of the differential equation \ref{PF} and the family $\overline{\mathcal{X}}\to \overline{S}$ is admisible (i.e. the monodromy representation at the fibers at infinity is irreducible and unipotent), then $g(t):=D_{PF}\overline{\nu_{\xi}}(t)$ defines a non zero holomorphic function which does not depend on the choice of the lifting (see \cite{delAngelMS}, thm. 1.1 and 3.2)  and we can associate to it (and therefore to the normal function $\nu_{\xi}$) a non zero class $\alpha_{\xi}$ in $H^1(S,\mathbb{H})$. Moreover, in this case the solutions of the equation $D_{PF}\nu_{\xi}(t)=g(t)$ are also solutions of the homogeneous equation
\begin{eqnarray}\label{ext-PF}
(\frac{d}{dt}-d\log g)D_{PF} h(t) = 0
\end{eqnarray}
which is the homogeneous equation associated to a local system $\mathbb{E}$. Observe that the solutions of the equation $D_{PF}h(t)=0$ are solutions of \ref{ext-PF} as well, therefore $\mathbb{E}$ is an extension of 
$\mathbb{H}$ of the form
\begin{eqnarray} \label{locsys}
0\to \mathbb{H}\to\mathbb{E}\to\mathbb{Q}(-r)\to 0
\end{eqnarray}
and we claim that this extension is determined by the class $\alpha_{\xi}$. 

Indeed, with the notation above, if the action of $\gamma$ on $\{w_1,\dots , w_n\}$ (a flat basis of $\mathcal{H}=\mathbb{H}\otimes\mathcal{O}_S$) is given by $M=\rho(\gamma)$, then the action of $\gamma$ on $\{w_1,\dots , w_n, \overline{\nu_{\xi}} \}$ (a flat basis of $\mathcal{E}=\mathbb{E}\otimes\mathcal{O}_S$) is given by 
\[
\left(\begin{matrix}
M & a_{\gamma}\cr
0 & 1
\end{matrix}\right)
\]
i.e., it is encoded in $\alpha_{\xi}$ so that we can recover the extension \ref{locsys} from it.

After tensoring \ref{locsys} with $\mathcal{O}_S$ we get the extension of $D_S$ modules (see \ref{nose})
\[
0\to \mathcal{H}^{2r-m-1}_{DR}(\mathcal{X}/S) \to \mathcal{E} \to \mathcal{O}_S(-r) \to 0
\]
associated to the local systems $\mathbb{H}, \mathbb{E}$ and $g\cdot \mathbb{Q}(-r)$. Thanks to theorem \eqref{teorema1}, we can think of the normal function $\nu_{\xi}$ as an extension of mixed Hodge modules whose underlying extension of $D_S$ modules is precisely the extension \ref{nose}. This reflects the fact that to a $\mbox{MHM}(S)$ one can associate both, a $D_S$-module and a perverse sheaf and so to an extension of $\mbox{MHM}(S)$ should correspond both, an extension of $D_S$-modules as well as an extension or perverse sheafs on $S$ and both are determined by the image of $\nu_{\xi}$ on the corresponding extension groups.
\begin{proposition}
The extension of $D_S$-modules associated to equation \ref{ext-PF} coincides with the one given by $\nu_{\xi}$ on $\Ext^{1}_{\text{MHM(S)}}(\mathbb{Q}_S(0), R^{2r-m-1}\pi_*(\mathcal{X}, \mathbb{Q}(r)))$, whereas the extension of local systems given by $\alpha_{\xi}$ coincides with the one given by $\nu_{\xi}$ on $\Ext^{1}_{\text{Perv(S)}}(\mathbb{Q}_S(0), R^{2r-m-1}\pi_*(\mathcal{X}, \mathbb{Q}(r)))$.
\end{proposition}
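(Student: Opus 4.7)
The plan is to identify both extensions by using the multivalued section $\overline{\nu_{\xi}}$ as a common thread; it simultaneously defines the normal function and is a solution of the inhomogeneous equation $D_{PF}h=g$. I would handle the $D_S$-module statement first, then the local system statement.

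For the $D_S$-module part: by Theorem \ref{teorema1} the extension $\mathcal{E}$ in \eqref{nose} comes with a distinguished lift of the generator of $\mathcal{O}_S$, and via the concrete realization of the Abel-Jacobi map due to Kerr, Lewis and M\"uller-Stach (cited after Definition \ref{aj}) this lift is identified with $\overline{\nu_{\xi}}$; the connection on $\mathcal{E}$ restricts to the Gauss-Manin connection on $\mathcal{H}^{2r-m-1}_{DR}(\mathcal{X}/S)(r)$. Since by construction $D_{PF}\,\overline{\nu_{\xi}}=g$ and $(\tfrac{d}{dt}-d\log g)(g)=0$, the operator $(\tfrac{d}{dt}-d\log g)D_{PF}$ annihilates $\overline{\nu_{\xi}}$. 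This produces a surjection of $D_S$-modules $D_S/D_S(\tfrac{d}{dt}-d\log g)D_{PF}\twoheadrightarrow \mathcal{E}$, which must be an isomorphism because both sides are extensions of the same rank-one quotient by the same Picard-Fuchs $D$-module. Hence the two classes in $\Ext^{1}_{\mathrm{MHM}(S)}(\mathbb{Q}_S(0),R^{2r-m-1}\pi_{*}\mathbb{Q}_{\mathcal{X}}(r))$, read at the $D_S$-module level, agree.

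For the local system statement, applying the de Rham functor to $\mathcal{E}$ yields a perverse sheaf whose underlying local system has a flat frame $\{w_1,\ldots,w_n,\overline{\nu_{\xi}}\}$, and its monodromy in this frame is the upper triangular matrix with diagonal blocks $M$ and $1$ and off-diagonal block $a_{\gamma}$, as already computed just before the proposition. This is exactly the cocycle representing $\alpha_{\xi}\in H^{1}(S,\mathbb{H})=\Ext^{1}_{\mathrm{Loc}(S)}(\mathbb{Q}(-r),\mathbb{H})$, and by Riemann-Hilbert the image of the MHM extension in $\Ext^{1}_{\mathrm{Perv}(S)}(\mathbb{Q}_S(0),R^{2r-m-1}\pi_{*}\mathbb{Q}_{\mathcal{X}}(r))$ is determined by this local-system extension.

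The step requiring the most care is the identification of the distinguished lift inside $\mathcal{E}$ with the classical representative $\overline{\nu_{\xi}}$ of the normal function: abstractly, $\mathcal{E}$ is produced by passing through the Leray filtration applied to a cycle class in $\Ext^{2r-m}_{\mathrm{MHM}(\mathcal{X})}(\mathbb{Q}_{\mathcal{X}}(0),\mathbb{Q}_{\mathcal{X}}(r))$, while $\overline{\nu_{\xi}}$ arises concretely from periods and currents. The pointwise comparison of \cite{KLMS}, combined with the functoriality of the Leray spectral sequence under the base-change $\mathrm{Spec}(\mathbb{C})\to S$ recorded in \eqref{diag1}, promotes the fiberwise identification to a compatible global statement over $S$; once this is in place, both halves of the proposition follow from the calculations sketched above.
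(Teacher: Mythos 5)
Your proposal is correct and follows essentially the same route as the paper, whose proof of this proposition is the discussion immediately preceding it: one identifies the multivalued lift $\overline{\nu_{\xi}}$ with the distinguished section of $\mathcal{E}$ via the Kerr--Lewis--M\"uller-Stach comparison and Theorem \ref{teorema1}, reads off the monodromy cocycle $a_{\gamma}$ in the frame $\{w_1,\dots,w_n,\overline{\nu_{\xi}}\}$ to recover $\alpha_{\xi}$, and observes that $(\tfrac{d}{dt}-d\log g)D_{PF}$ annihilates $\overline{\nu_{\xi}}$ so that tensoring with $\mathcal{O}_S$ yields the $D_S$-module extension \eqref{nose}. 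Your cyclic-quotient presentation $D_S/D_S(\tfrac{d}{dt}-d\log g)D_{PF}\twoheadrightarrow\mathcal{E}$ is a mild elaboration (implicitly using that $\mathcal{H}$ is cyclic over $D_S$, as the existence of a single operator $D_{PF}$ presumes) but does not change the argument.
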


Moreover, Brosnan et. al (see \cite{brosnan}) have shown that the class $\alpha_{\xi}$ is actually a Tate class of weight 0 in $\mbox{IH}^1(S,\mathbb{H})$.

\end{document}